\documentclass[12pt, reqno]{amsart}

\usepackage{amsthm, amsmath, amsfonts, amscd, amssymb, stmaryrd, mathrsfs}
\usepackage[breaklinks,colorlinks,citecolor=blue,linkcolor=blue,urlcolor=blue,hyperindex]{hyperref}
\usepackage{color}
\usepackage{tikz}
\usepackage{hyperref}
\usetikzlibrary{arrows} \usetikzlibrary{patterns}
\usepackage[all]{xy}
\usepackage[top=2.5cm, bottom=2.5cm, left=3cm, right=3cm]{geometry}
\usepackage{backref}
\linespread{1.2}

\newtheorem{thm}{Theorem}[section]
\newtheorem{cor}[thm]{Corollary}
\newtheorem{lem}[thm]{Lemma}
\newtheorem{prop}[thm]{Proposition}

\theoremstyle{definition}

\newcommand{\ZN}{\mathbb{Z}}

\newcommand{\CO}{\mathcal{O}}

\newcommand{\BP}{\mathbb{P}}

\newcommand{\NS}{\mathrm{N}^{1}}

\newcommand{\Nef}{\mathrm{Nef}}

\newcommand{\Eff}{\mathrm{Eff}}

\newcommand{\Amp}{\mathrm{Amp}}

\newcommand{\Tors}{\mathrm{Tors}}

\makeatletter
\@namedef{subjclassname@2020}{%
  \textup{2020} Mathematics Subject Classification}
\makeatother

\allowdisplaybreaks

\numberwithin{equation}{section}


\begin{document}

\title[Some results on  fake quadrics]{Some results on   fake  quadrics.}

\author[J.-Q. Yang]{Jianqiang Yang}

\footnote{2010 Mathematics Subject Classification. Primary  14J29; Secondary  14J25,   32J15.

  Key words and phrases. Fake quadrics, cone, effective divisor, ample divisor, bounded cohomology property.

   }

\address{School of Mathematical Sciences, Guangxi Minzu University, Nanning, 530006, P. R. China.}
\email{yangjq\_math@sina.com}

\date{}

\begin{abstract} In this paper, we obtain several results on fake quadrics. First, we describe the cones \(\operatorname{Nef}(S)\) and \(\overline{\operatorname{Eff}(S)}\), and then provide a criterion for determining whether a divisor on a fake quadric is   ample or effective, respectively. Second, we show that no fake quadric can be embedded into $\mathbb P^4$. Finally,  we establish the bounded cohomology property for fake quadrics.
\end{abstract}

\maketitle


\section{introduction}\label{sec:1}

A \emph{fake quadric} is a minimal projective surface of general type with the same Betti numbers as a smooth quadric surface but not isomorphic to it. Equivalently, it is a minimal surface of general type with numerical invariants
\[
q(S)=p_g(S)=0,\qquad K_S^2=8,
\]
where $q(S)=\dim H^1(S,\mathcal O_S)$ is the irregularity, $p_g(S)=\dim H^0(S,\omega_S)$ is the geometric genus, and $K_S$ is the canonical divisor (cf. \cite{BCP08,BCP11,CCCK25,C25,Dza14,DR14}). Since such a surface shares the same $\mathbb Q$-homology algebra as a quadric, it is also called a fake $\mathbb Q$-homology quadric.

The study of fake quadrics is motivated by the classification of fake projective planes, i.e., surfaces of general type with the same Betti numbers as $\mathbb P^2$. Yau proved that every fake projective plane is a quotient $\mathbb B_2/\Gamma$ of the complex $2$-ball by a cocompact torsion-free discrete subgroup $\Gamma \subset \operatorname{Aut}(\mathbb B_2)$ \cite{Yau77,Yau78}. Subsequently, Prasad and Yeung gave a complete classification of all fake projective planes \cite{PY07}. In contrast, the classification of fake quadrics remains largely open. In particular, the existence of a simply-connected fake quadric, first raised by Hirzebruch (cf. \cite[p.8]{BCP11} and \cite[p.780]{Hir87}), is still unresolved.

Let $S$ be a fake quadric surface. It is well-known that the canonical divisor $K_S$ is ample. 
In fact, for any reduced and irreducible curve $C$ on $S$, since $K_S$ is nef, 
$K_S \cdot C = 0$ if and only if $C$ is a $(-2)$-curve. 
Let $k$ be the number of $(-2)$-curves on $S$. 
By a result of Miyaoka \cite[Proposition~2.1.1]{BHPV04,Miy84}, we have 
$k \le 2(3c_2 - c_1^2)/9 = 4/9 < 1$, hence $k = 0$. 
Therefore $K_S$ is ample.

For a fake quadric $S$, the exponential exact sequence yields an isomorphism $\operatorname{Pic}(S)\cong H^2(S,\mathbb Z)$. Hence the N\'eron--Severi lattice $\operatorname{NS}(S)\cong H^2(S,\mathbb Z)/\operatorname{tors}$ is a unimodular indefinite lattice of rank $2$ and signature $0$ by Poincar\'e duality and the Hodge index theorem. Consequently, $\operatorname{NS}(S)$ is either even or odd, i.e.,
\[
\operatorname{NS}(S)\cong U:=\begin{pmatrix}0 & 1\\1 & 0\end{pmatrix}\quad\text{or}\quad \operatorname{NS}(S)\cong \langle 1\rangle\oplus\langle -1\rangle:=\begin{pmatrix}1 & 0\\0 & -1\end{pmatrix}.
\]
We say that a fake quadric is of \emph{even type} in the former case and of \emph{odd type} in the latter.

Fake quadrics of odd type constitute the main focus of the present paper (cf. \cite{CCCK25,C25}). Historically, little attention has been paid to fake quadrics of odd type, and it was even unclear whether such surfaces exist at all, until F. Catanese provided explicit examples in \cite{C25}. Compared with the even type case, fake quadrics of odd type are significantly more challenging to handle, mainly due to the fact that it is not known a priori whether such a surface admits negative curves.  The odd type case is therefore the main difficulty of this paper.

Despite substantial progress on fake quadrics (cf. \cite{CCCK25,C25,Dza14,DR14,FL22,LSV19}), a systematic treatment of divisor effectiveness and ampleness has been lacking. The present paper addresses this gap. Specifically, we establish the following result.

\begin{thm}\label{thm:cone}
Let $S$ be a fake quadric of even type. Then the following cone relations hold:
\begin{equation}\label{thm:even}
\operatorname{Nef}(S)=\overline{\operatorname{Eff}(S)}=\mathbb R_{\ge 0}[D_1]+\mathbb R_{\ge 0}[D_2],
\end{equation}
where $D_1,D_2\subset S$ are two distinct divisors with $D_i^2=0$ for $i=1,2$.

Let $S$ be a fake quadric of odd type. 
\begin{itemize}
    \item If $S$ contains no negative curve, then the cone relations are the same as in \eqref{thm:even}.
    \item If a negative curve $E$ exists on $S$, then
    \[
    \overline{\operatorname{Eff}(S)} = \mathbb{R}_{\ge 0}[D_1 - D_2] + \mathbb{R}_{\ge 0}[E]
    \]
    and
    \[
    \operatorname{Nef}(S) = \mathbb{R}_{\ge 0}[D_1 - D_2] + \mathbb{R}_{\ge 0}[N],
    \]
    where the divisors $D_1,D_2\subset S$ satisfy 
    \[
    D_1^2 = 1,\quad D_2^2 = -1,\quad D_1 \cdot D_2 = 0,
    \]
    and $N$ is a nef divisor such that $N \cdot E = 0$.
\end{itemize}
\end{thm}

Note that the quadric $\mathbb P^1 \times \mathbb P^1$ is a hypersurface in $\mathbb P^3$, while the blow-up of $\mathbb P^2$ at a point, i.e., $\mathbb P^2 \sharp \overline{\mathbb P^2}$, embeds as a subvariety of $\mathbb P^4$. This naturally raises the question: does there exist a fake quadric that admits a holomorphic embedding into $\mathbb P^4$? Applying our divisor criteria, we obtain the following negative answer.

\begin{thm}\label{thm:P4}
No fake quadric can be holomorphically embedded into $\mathbb P^4$.
\end{thm}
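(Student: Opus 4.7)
My plan is to assume for contradiction that $i\colon S\hookrightarrow\mathbb P^4$ is a smooth holomorphic embedding with hyperplane section $H=i^*\mathcal O_{\mathbb P^4}(1)$ of degree $d=H^2$, and to derive a numerical impossibility by combining the double point formula with the unimodularity of $\NS(S)$ and the ampleness criterion.

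First I set up the double point identity. From the normal bundle sequence $0\to T_S\to T_{\mathbb P^4}|_S\to N\to 0$, whose total Chern class reads $(1-K_S+c_2(S))\,c(N)=(1+H)^5$, together with the self-intersection equality $i^*i_*[S]=c_2(N)=d^2$, substituting $K_S^2=8$ and $c_2(S)=4$ yields
\[
5\,H\cdot K_S \;=\; d^2-10d-4. \tag{$\ast$}
\]
Consequently $d\equiv\pm 2\pmod 5$. Ampleness of $H$ and $K_S$ forces $H\cdot K_S>0$, and the Hodge index inequality $(H\cdot K_S)^2\ge H^2K_S^2=8d$ combined with $(\ast)$ sharpens the bound to $d\ge 17$.

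Next I exploit the unimodularity of the rank-$2$ lattice $\NS(S)$: for any two $\mathbb Z$-linearly independent classes $D_1,D_2\in\NS(S)$, one has $(D_1\cdot D_2)^2-D_1^2 D_2^2 = [\NS(S):\mathbb Z\langle D_1,D_2\rangle]^2\ge 1$. I apply this to $(H,K_S)$, first invoking Theorem \ref{ample-even} (resp.\ Theorem \ref{eff-nef-big-odd}) to parametrize the ample divisor $H$ in a basis adapted to $K_S$ and to rule out $H\in\mathbb Q\cdot K_S$: any integer multiple $H=nK_S$ would give $d=8n^2$ and reduce $(\ast)$ to $40n=64n^4-80n^2-4$, which has no positive integer solution, and the analogous equation $n^4-5n^2-5n-1=0$ for the half-primitive case $K_S=2(e_1+e_2)$ is equally unsolvable. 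It follows that $(H\cdot K_S)^2-8d=k^2$ for some $k\in\mathbb Z_{\ge 1}$; writing $(H\cdot K_S-k)(H\cdot K_S+k)=8d$ and using that the smaller factor is at least $1$ gives $H\cdot K_S\le 8d$, and substituting $(\ast)$ collapses this to $d^2-50d-4\le 0$, i.e.\ $d\le 50$.

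A finite check now completes the argument: for each
\[
d\in\{17,18,22,23,27,28,32,33,37,38,42,43,47,48\}
\]
(the integers with $17\le d\le 50$ and $d\equiv\pm 2\pmod 5$), I compute $H\cdot K_S=(d^2-10d-4)/5$ from $(\ast)$ and verify that $(H\cdot K_S)^2-8d$ lies strictly between two consecutive integer squares, hence is not a perfect square. This contradicts the unimodularity requirement and rules out the embedding. The principal obstacle is the preliminary linear-independence step, which genuinely requires Theorem \ref{ample-even} and Theorem \ref{eff-nef-big-odd} to locate $H$ in the ample cone and to exclude its proportionality to $K_S$; the remainder of the argument is purely quantitative.
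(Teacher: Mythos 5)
Your proof is correct, and it takes a genuinely different route from the paper's. Both arguments open with the same double point identity $d^{2}-10d-5\,H\cdot K_{S}-2K_{S}^{2}+12\chi(\CO_{S})=0$, but the paper then substitutes coordinates $D\equiv xH+yF$ separately for each lattice type (thus invoking Theorems \ref{ample-even} and \ref{eff-nef-big-odd}, and in the odd case the delicate exclusion of negative curves in Lemma \ref{lem:nonnegative}) and closes with Zak's linear normality theorem, which forces $h^{0}(S,\CO_{S}(D))<6$ and contradicts the Riemann--Roch/Kodaira-vanishing count. You instead argue coordinate-free: the Hodge index inequality gives $d\ge 17$, unimodularity of $\NS(S)$ forces $(H\cdot K_{S})^{2}-8d$ to be a positive perfect square once proportionality of $H$ and $K_{S}$ is excluded (which you do by direct computation on the primitive generator of the ray $\RN_{>0}K_{S}$, using only that $K_{S}$ is primitive or twice a primitive class of square $2$), and the factorization $(H\cdot K_{S}-k)(H\cdot K_{S}+k)=8d$ yields $d\le 50$, leaving the fourteen-case check, which I have verified (none of the values $(H\cdot K_{S})^{2}-8d$ is a square). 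One correction to your own assessment: your argument does not actually depend on Theorems \ref{ample-even} and \ref{eff-nef-big-odd} at all --- ampleness of the hyperplane class is automatic, and the proportionality exclusion is pure lattice arithmetic --- so your proof is more self-contained than the paper's, needing neither the negative-curve analysis nor Zak's theorem, and it treats both lattice types uniformly; the price is a longer finite verification where the paper's bound $h^{0}<6$ kills the Diophantine system almost immediately.
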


Recall that a smooth projective surface $S$ satisfies the \emph{bounded cohomology property} if there exists a positive constant $c_S$ such that
\[
h^1(\mathcal O_S(C))\le c_S\cdot h^0(\mathcal O_S(C))
\]
for every reduced and irreducible curve $C\subset S$. This property originated from discussions among Harbourne, Ro\'e, Cilberto, and Miranda (cf. \cite{BBCRDHJK12}). As an application of our divisor criteria, we prove the following theorem for fake quadrics.

\begin{thm}\label{thm:bound}
Let $S$ be a fake quadric. Then the bounded cohomology property holds in the following cases:
\begin{itemize}
    \item $S$ is of even type;
    \item $S$ is of odd type and contains a negative curve $E$;
    \item $S$ is of odd type, contains no negative curve, and is a Mori dream space.
\end{itemize}
\end{thm}

The paper is organized as follows. Section~2 treats fake quadrics of even type, while Section~3 is devoted to the odd case. The more tedious calculations in the proof of Theorem \ref{thm:P4} are deferred to Section~4.

\section*{Notation and conventions}

We work throughout over the  field of  complex numbers $\mathbb C$. By a surface we mean a smooth projective complex surface. Unless otherwise specified, a curve on a surface is a non-zero effective divisor. We use the following standard notations:

\textbullet\ \  $h^{i}(S,\CO_{S}(D))$: $=\dim H^{i}(S,\CO_{S}(D))$;

\textbullet\ \  $C.D$: the intersection number of  two divisors  $C$ and $D$;

\textbullet\ \ $C\sim D:$ the two divisors $C,D$ are linear  equivalence;

\textbullet\ \ $C \equiv D:$ the two divisors $C,D$ are numerical  equivalence;

\textbullet\ \ $C =_{hom} D:$ the two divisors $C,D$ are homology  equivalence;

\textbullet\ \  $\Nef(S)$: the nef cone of the  surface $S$;

\textbullet\ \  $\Amp(S)$: the ample cone of the  surface $S$.

\textbullet\ \  $\Eff(S)$: the effective cone of the  surface $S$.

It is noteworthy that for a fake quadric $S$, by exponent exact sequence, then 
 \[C\sim D \Leftrightarrow C =_{hom} D \quad \text{and} \quad (C=_{hom} D) / \Tors \Leftrightarrow C \equiv D . \]
 
\subsection*{Acknowledgements}
We are grateful to Yifan Chen, Cheng-Yong Du, and Song Yang for their helpful communications. We would also like to extend our sincere appreciation to Fabrizio Catanese for his valuable insights on the N\'eron–Severi lattice of fake quadrics and for his thoughtful advice on this manuscript, shared via email.

This work was partially supported by the National Natural Science Foundation of China (12061014), the Guangxi Natural Science Foundation (2025GXNSFAA069316), and the Guangxi Science and Technology Program (AA24010005).

\section{Case I. A fake quadric of  even type}\label{sec:2}

Let $S$ be a fake quadric of even type.
By the definition, its N\'{e}ron--Severi lattice
$$
 \NS(S)
 =
 \left(
\begin{array}{cc}
0 & 1  \\
1 & 0 \\
\end{array}
\right)
$$
and hence there exist two divisors $D_1$ and $D_2$ such that
$$
D_1^{2}=0, D_2^{2}=0 \textrm{ and } D_1.D_2=1.
$$
From now on, we shall fix these two generators $D_1$ and $D_2$ in the N\'{e}ron--Severi lattice $\NS(S)$ of a fake quadric $S$ of even type.
Without loss of generality, since $K_{S}^{2}=8$,
we may suppose $K_{S}\equiv 2D_1+2D_2$.  In fact, the intersection form being even is equivalent to the vanishing 
of the second Stiefel--Whitney class $w_2(X)=0$ on $H_2(S,\mathbb Z)$ (cf. \cite[Chapter~10.2A, p.~164]{FQ90}). Since $w_2(S) \equiv c_1(S) \pmod{2}$ for a complex surface, 
this implies $c_1(S) \equiv 0 \pmod{2}$.
For a divisor $D\equiv xD_1+yD_2$,
by the Riemann--Roch theorem,
we have
\begin{equation}\label{even-RR-chi}
\chi(\CO_{S}(D))=1+\frac{1}{2}D(D-K_S)=(x-1)(y-1).
\end{equation}

\subsection{Criterion for Divisors: Effectiveness and Ampleness}\label{subsec:even-criterion}

\begin{lem}\label{eff-even}
Let $S$ be a fake quadric of even type and let $C \subset S$ be a reduced and irreducible curve.
If $C \equiv xD_1 + yD_2$ for some $x, y \in \mathbb Z$,
then either $x \ge 0$ and $y > 0$, or $x > 0$ and $y \ge 0$.
\end{lem}

\begin{proof}
Since $K_S$ is ample, we have $K_S.C > 0$, hence $x + y > 0$.
The arithmetic genus formula gives
\begin{equation}\label{even-ari-gen}
p_a(C) = \frac{1}{2}(K_S \cdot C + C^2) + 1 = (x+1)(y+1) \ge 0.
\end{equation}
Thus only two cases are possible:
\begin{enumerate}
\item[(1)] $x = -1$ and $y \ge 2$, or $x \ge 2$ and $y = -1$;
\item[(2)] $x \ge 0$ and $y > 0$, or $x > 0$ and $y \ge 0$.
\end{enumerate}

We now rule out case (1). 
Suppose $C \equiv -D_1 + yD_2$ with $y \ge 2$ (or $C \equiv xD_1 - D_2$ with $x \ge 2$).
Then $p_a(C) = 0$, so $C$ is a smooth rational curve.

The log Miyaoka–Yau inequality \cite{Lan01,LM95}   $(K_S + C)^2 \le 3\bigl(e(S) - e(C)\bigr)$  forces $y \le 1$ (respectively $x \le 1$), a contradiction.
This completes the proof of Lemma \ref{eff-even}.
\end{proof}

Based on the above statement,  we begin to prove Theorem \ref{thm:cone} of even type.

\begin{proof}  [ Proof of Theorem \ref{thm:cone} of the even type.]
By Lemma \ref{eff-even}, if $D \equiv xD_1 + yD_2$ is effective, then $x \ge 0$ and $y \ge 0$. Hence
\[
\operatorname{Eff}(S) \subset \mathbb R_{\ge 0}[D_1] + \mathbb R_{\ge 0}[D_2].
\]
For $x \ge 0$ and $y \ge 0$, we have $D_1. D \ge 0$ and $D_2.D \ge 0$; consequently, both $D_1$ and $D_2$ are nef. Thus
\[
\mathbb R_{\ge 0}[D_1],\; \mathbb R_{\ge 0}[D_2] \subset \operatorname{Nef}(S).
\]
Note that $\operatorname{Eff}(S)$ and $\operatorname{Nef}(S)$ are convex cones. Together with the inclusion $\operatorname{Nef}(S) \subset \overline{\operatorname{Eff}(S)}$, we obtain
\[
\operatorname{Nef}(S) = \mathbb R_{\ge 0}[D_1] + \mathbb R_{\ge 0}[D_2]
\quad\text{and}\quad
\overline{\operatorname{Eff}(S)} = \mathbb R_{\ge 0}[D_1] + \mathbb R_{\ge 0}[D_2].
\] 
\end{proof}

By  Theorem \ref{thm:cone}, we conclude that 

\begin{cor}\label{coro:ample-even}
Let $S$ be a fake quadric of even type and  divisor $D \equiv  xD_1+yD_2$. Then   the divisor $D$ is ample if and only if $x > 0$ and $y > 0$. 
\end{cor}

From  Theorem \ref{thm:cone} and Formula (\ref{even-ari-gen}), we can immediately obtain that

\begin{cor}\label{coro:genus-even}
Let $S$ be a fake quadric of even type. Then for any reduced and irreducible curve $C$ on $S$,  it holds $C^2 \geq 0$ and the arithmetic genus $p_{a}(C)\geq 2$.
\end{cor}

As a further application of the theorem, we specify precisely when the divisor is effective. This result will be needed in the next section.

\begin{thm}\label{coro:effective}
Let $S$ be a fake quadric of even type and let $D$ be a divisor on $S$ with $D \equiv xD_1 + yD_2$ for some $x, y \in \mathbb Z$.
\begin{enumerate}
\item[(i)] If $x, y \ge 2$, then $h^0(S, \mathcal O_S(D)) > 0$, unless $D \sim K_S$.
\item[(ii)] If $x, y \ge 3$, then
\[
h^0(S, \mathcal O_S(D)) = \chi(S, \mathcal O_S(D)) \ge 4.
\]
\end{enumerate}
\end{thm}

\begin{proof}
(i)    Since $K_S - D \equiv (2 - x)D_1 + (2 - y)D_2$, the assumptions together with Theorem \ref{thm:cone} imply that $K_S - D$ is either non-effective   or numerically trivial. In the latter case, $D \equiv K_S$, and if moreover $D \not\sim K_S$ then $K_S - D$ is a non-zero torsion divisor. Hence $h^0(S, \mathcal O_S(K_S - D)) = 0$  unless $D \sim K_S$. Consequently, the Riemann–Roch theorem yields
\[
h^0(S, \mathcal O_S(D)) \ge \chi(\mathcal O_S(D)) = (x-1)(y-1) \ge 1.
\]

(ii) By Theorem \ref{thm:cone}, the assumption $x,y$ guarantees that $D - K_S$ is ample. The Kodaira vanishing theorem then implies $h^i(S, \mathcal O_S(D)) = 0$ for all $i > 0$. Therefore,
\[
h^0(S, \mathcal O_S(D)) = \chi(S, \mathcal O_S(D)) = (x-1)(y-1) \ge 4.
\]
This completes the proof.
\end{proof}

\subsection{Embedding fake quadrics of even type  in $\BP^{4}$}

Now we prove Theorem \ref{thm:P4} for even type.

\begin{proof}[Proof of Theorem \ref{thm:P4} of the even type.]
Let $S$ be a fake quadric of even type and suppose there exists an embedding $\iota: S \hookrightarrow \mathbb P^4$.
Then $D := \iota^* \mathcal O_{\mathbb P^4}(1)$ is a very ample divisor on $S$.
Write $D \equiv xD_1 + yD_2$ with $x, y \in \mathbb Z$.
By Corollary \ref{coro:ample-even}, we have $x > 0$ and $y > 0$.

A well-known numerical condition for a surface embedded in $\mathbb P^4$ (see \cite[Appendix A, Example 4.1.3]{Har77}) gives
\begin{equation}\label{double-point-formula}
d^2 - 10d - 5D.K_S - 2K_S^2 + 12 + 12p_a(S) = 0,
\end{equation}
where $d = D^2$ and $p_a(S) = 0$.
Substituting $D \equiv xD_1 + yD_2$ and $K_S \equiv 2D_1 + 2D_2$, we obtain
\begin{equation}\label{embed-eq1}
2x^2y^2 - 10xy - 5x - 5y - 2 = 0, \qquad x, y > 0.
\end{equation}

If $0 < x < 3$ or $0 < y < 3$, by Lemma \ref{lem:1}, equation \eqref{embed-eq1} has no solutions.

By \cite[Chapter V, Corollary 2.12]{Zak93}, we have $h^0(S, \mathcal O_S(D)) < \binom{4}{2} = 6$.
Together with Theorem \ref{coro:effective}(ii), this yields
\begin{equation}\label{bound-eq1}
(x-1)(y-1) = h^0(S, \mathcal O_S(D)) < 6.
\end{equation}

For $x \ge 3$ and $y \ge 3$, by Lemma \ref{lem:2},  equations \eqref{embed-eq1} and \eqref{bound-eq1} cannot be satisfied simultaneously.

Thus no such embedding exists, completing the proof.
\end{proof}

According to  the formula \eqref{even-RR-chi},   the above theorem immediately yields the following Corollary.

\begin{cor}\label{coro:very 1}
Let $S$ be a fake quadric of even type. The the divisor $D \equiv 3D_1+3 D_2$ is not very ample.
\end{cor}

\begin{proof} By Lemma \ref{coro:effective}, we have that \[h^0(S, \mathcal O_S(D)) = \chi(S, \mathcal O_S(D))=(3-1)(3-1)=4.\]  If such divisor $D$ is very ample, then $\tau_{\rvert D \rvert}: S \to \BP^3$ is an embedding. This contradicts the Theorem \ref{thm:P4}.
\end{proof}

\subsection{The bounded cohomology property for the  fake quadrics  of  even type}

In this subsection, a curve always means a reduced and irreducible curve on the surface.
The following lemma will be needed.

\begin{lem}\label{lem:xy2}
Let $S$ be a fake quadric of even type and let $C \equiv xD_1 + yD_2$ be a curve on $S$ with $x, y \in \mathbb Z$.
If $x, y \ge 2$, or $x = 1$, or $y = 1$, then $C$ satisfies the bounded cohomology property.
\end{lem}

\begin{proof} 
Note that $h^0(\CO_S(K_S))=h^2(\CO_S)=0$.  This implies that $K_S - C$ is not effective. Serre duality gives
\[
h^2(S, \mathcal O_S(C)) = h^0(S, \mathcal O_S(K_S - C)) = 0.
\]
Hence the Riemann–Roch theorem yields
\[
h^0(S, \mathcal O_S(C)) - h^1(S, \mathcal O_S(C)) = \chi(\mathcal O_S(C)) = (x-1)(y-1).
\]

If $x, y \ge 2$, then $h^1(S, \mathcal O_S(C)) < h^0(S, \mathcal O_S(C))$.
If $x = 1$ or $y = 1$, then $h^1(S, \mathcal O_S(C)) = h^0(S, \mathcal O_S(C))$.
In summary, we have
\[
h^1(S, \mathcal O_S(C)) \le h^0(S, \mathcal O_S(C)).
\]
\end{proof}

Before we completely resolve the theorem, let us first look at a useful conclusion.

\begin{prop}\label{prop:D2=0}
\cite[Proposition 1.3]{AL11}
Let $S$ be a smooth projective surface with $q(S) = 0$ and let $D$ be a non-trivial effective divisor on $S$ such that $|D|$ has no fixed components and $D^2 = 0$.
Then $D = aC$ for some $a \in \mathbb Z_{>0}$, where $C$ is smooth and irreducible, $h^0(S, \mathcal O_S(C)) = 2$, and $H^0(S, \mathcal O_S(D)) \cong \operatorname{Sym}^a H^0(S, \mathcal O_S(C))$.
\end{prop}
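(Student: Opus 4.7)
The plan is to use the pencil $|D|$ to produce a fibration $S \to \BP^1$ and then identify $D$ as a multiple of a smooth general fibre. My first step is to show that $|D|$ is base-point free. Since $|D|$ carries no fixed component but $D$ is non-trivial, $\dim|D|\geq 1$; for any two distinct members $D_1, D_2 \in |D|$, they share no irreducible component, so $D_1 \cap D_2$ is a finite set of points contributing non-negatively to $D_1 \cdot D_2 = D^2 = 0$. This forces $D_1 \cap D_2 = \emptyset$, so the base locus of $|D|$ is empty.

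Next I would analyze the morphism $\phi_{|D|}: S \to \BP^N$ with $N = \dim|D|$. Its image has dimension at most $1$: if it were a surface, then $\phi_{|D|}$ is generically finite onto its image and $D^2 = \deg(\phi_{|D|}) \cdot \deg(\mathrm{Im}\,\phi_{|D|}) > 0$, a contradiction. Taking the Stein factorization $\phi_{|D|} = \iota \circ g \circ f$, I obtain a surjection $f: S \to Y$ with connected fibres onto a smooth projective curve $Y$. The hypothesis $q(S)=0$, combined with the injection $H^1(Y,\CO_Y)\hookrightarrow H^1(S,\CO_S)$ coming from $f_* \CO_S = \CO_Y$, forces the genus of $Y$ to vanish, hence $Y \cong \BP^1$.

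I then take $C$ to be a general fibre of $f$: generic smoothness in characteristic zero makes $C$ smooth, while connectedness of fibres together with smoothness makes $C$ irreducible, so $C$ is smooth and integral. Writing $g^*\iota^*\CO_{\BP^N}(1)=\CO_{\BP^1}(a)$ for some integer $a\geq 1$, we obtain
$$\CO_S(D)=\phi_{|D|}^*\CO_{\BP^N}(1)=f^*\CO_{\BP^1}(a)=\CO_S(aC),$$
so $D\sim aC$. The projection formula together with $f_*\CO_S=\CO_{\BP^1}$ then yields
$$H^0(S,\CO_S(D)) = H^0(\BP^1,\CO_{\BP^1}(a)) = \Sym^a H^0(\BP^1,\CO_{\BP^1}(1)) = \Sym^a H^0(S,\CO_S(C)),$$
which in particular gives $h^0(\CO_S(C)) = 2$.

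The step I expect to be the main obstacle is the first one: one must verify carefully that the absence of fixed components together with $D^2=0$ really rules out isolated base points, because everything else is routine once $|D|$ is a base-point-free pencil. After that, the combination of Stein factorization with the hypothesis $q(S)=0$ delivers the fibration over $\BP^1$, and the conclusion follows from the standard cohomological computation on $\BP^1$.
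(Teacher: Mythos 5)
The paper itself offers no proof of this proposition: it is imported verbatim as \cite[Proposition 1.3]{AL11}, so the only meaningful comparison is with the standard argument, which your proposal essentially reproduces (base-point-freeness of $|D|$, image of $\phi_{|D|}$ a curve since $D^2=0$, Stein factorization, $q(S)=0$ forcing the base to be $\BP^1$, and the projection formula for the cohomological identities). Steps 2--5 of your write-up are correct as stated, including the observation that $h^0(\CO_S(C))$ is \emph{exactly} $2$ via $f_*\CO_S=\CO_{\BP^1}$, and your reading of the conclusion as $D\sim aC$ is the right one.

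The flaw is exactly where you predicted it, in Step 1, and it is a genuine error rather than a presentational one: the claim that \emph{any} two distinct members $D_1,D_2\in|D|$ share no irreducible component is false, and it is false precisely in the situation the proposition describes whenever $a\geq 2$. Indeed, once the proposition is proved, $|D|=|aC|$ with $|C|$ a free pencil, and the distinct members $C_1+C_2$ and $C_1+C_3$ (for distinct $C_i\in|C|$) share the component $C_1$. ``No fixed component'' only says that no irreducible curve lies in \emph{every} member. The repair is standard: fix one member $M_0=\sum a_i\Gamma_i$; for each $i$ the set of members containing $\Gamma_i$ is a proper linear subsystem of $|D|$ (proper because $\Gamma_i$ is not a fixed component), so a member $M_1$ outside the union of these finitely many proper linear subspaces shares no component with $M_0$; then $M_0\cap M_1$ is finite, is empty because $M_0.M_1=D^2=0$, and contains the base locus. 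With that substitution (``two suitably general members'' in place of ``any two distinct members'') your proof is complete and coincides with the argument behind the cited result.
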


With the above preparations, we now prove the Theorem \ref{thm:bound}.

\begin{proof}[Proof of Theorem \ref{thm:bound} of the even type]
In light of Lemma \ref{lem:xy2}, it remains to consider the case where $x = 0$ or $y = 0$.
Without loss of generality, assume $C \equiv x_0 D_1$ for some $x_0 > 0$.  The case $C \equiv x_0 D_2$ is similar.
Since $C$ is reduced and irreducible and $C^2 = 0$, by Proposition \ref{prop:D2=0},  we have $h^0(S, \mathcal O_S(C)) \in \{1, 2\}$.

\emph{Case 1:} $h^0(S, \mathcal O_S(C)) = 1$.
From
\[
h^0(S, \mathcal O_S(C)) - h^1(S, \mathcal O_S(C)) = \chi(\mathcal O_S(C)) = (x-1)(y-1) = 1-x_0
\]
and $h^0(S, \mathcal O_S(C)) = 1$, we obtain $h^1(S, \mathcal O_S(C)) = x_0$.
Thus
\[
h^1(S, \mathcal O_S(C)) = x_0 \cdot h^0(S, \mathcal O_S(C)).
\] Note that if $C_1 \equiv C_2 \equiv C$, then $C_1$ and $C_2$ differ by a torsion divisor. Therefore, there are only finitely many values of $x_0$; let $c_1$ be the maximum of $x_0$ over these values. Then
\[
h^1(S, \mathcal O_S(C)) \le c_1 \cdot h^0(S, \mathcal O_S(C)).
\]

\emph{Case 2:} $h^0(S, \mathcal O_S(C)) = 2$.
A similar computation gives $h^1(S, \mathcal O_S(C)) = x_0 + 1$, so
\[
h^1(S, \mathcal O_S(C)) = \frac{x_0 + 1}{2} \cdot h^0(S, \mathcal O_S(C)).
\]
By Proposition \ref{prop:D2=0},  only finitely many values of $x_0$ can occur; let $c_2$ be the maximum of $\frac{x_0+1}{2}$ over these values. Then
\[
h^1(S, \mathcal O_S(C)) \le c_2 \cdot h^0(S, \mathcal O_S(C)).
\]

Combining both cases, we conclude that the bounded cohomology property holds for all curves on $S$.
\end{proof}

\section{Case II. A fake quadric of  odd type}\label{sec:3}

Let $S$ be a fake quadric of odd type.
According to the definition, the N\'{e}ron--Severi lattice
$$
 \NS(S)
 =
 \left(
\begin{array}{cc}
1 & 0  \\
0 & -1 \\
\end{array}
\right)
$$
and thus there are two divisors $D_1$ and $D_2$ with intersection numbers
$$
D_1^{2}=1,\; D_2^{2}=-1 \;\text{ and }\; D_1.D_2=0.
$$
From now on, we will fix these two generators $D_1$ and $D_2$ of the N\'{e}ron--Severi lattice $\NS(S)$.
  Recall that the intersection form of $S$ is odd if and only if 
the second Stiefel--Whitney class $w_2(S)$ is nonzero. This menas that  the canonical divisor $K_{S}$ is a primitive divisor in $ \NS(S)$.
Without loss of generality, since $K_{S}^{2}=8$, we may assume $K_{S}\equiv 3D_1-D_2$.
For a divisor $D\equiv xD_1+yD_2$,
by the Riemann--Roch theorem,
we have
\begin{equation}\label{odd-RR-chi}
\chi(\CO_{S}(D)) = \frac{1}{2}(x+y-1)(x-y-2).
\end{equation}

\subsection{Criterion for Divisor: Effectiveness and Ampleness}

\begin{lem}\label{eff-odd}
Let $S$ be a fake quadric of odd type and $C$ be a reduced and irreducible curve on $S$.
If $C\equiv xD_1+yD_2$ for some $x, y\in \ZN$,
then $x\geq 0$ and $-x-1<y\leq x+1$.
\end{lem}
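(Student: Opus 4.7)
The plan is to mimic the argument of Lemma~\ref{eff-even}, extracting two numerical constraints from ampleness of $K_S$ and nonnegativity of the arithmetic genus, and then eliminating the remaining borderline cases by invoking the Liu--Miyaoka inequality \cite[Theorem~1(2)]{LM95}.

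Using $K_S \equiv 3H - F$ together with $H^2 = 1$, $F^2 = -1$, $H.F = 0$, a short intersection computation gives $K_S.C = 3x + y$ and $C^2 = x^2 - y^2$, so the arithmetic genus formula can be rewritten as
$$p_a(C) = \frac{1}{2}\bigl((x+1)(x+2) - y(y-1)\bigr).$$
Ampleness of $K_S$ translates to $3x + y > 0$, while $p_a(C) \geq 0$ translates to $(x+1)(x+2) \geq y(y-1)$. These two inequalities are the only inputs needed for the bulk of the argument.

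If $x \leq -1$, ampleness forces $y \geq -3x + 1 \geq 4$, which pushes $y(y-1)$ well above $(x+1)(x+2)$ for every negative integer $x$ and contradicts the genus inequality; hence $x \geq 0$. For $x \geq 0$, any $y \geq x+3$ or $y \leq -x-2$ again makes $y(y-1) > (x+1)(x+2)$, so the only possibilities outside the claimed range are the borderline values $y = x+2$ and $y = -x-1$, at which the genus inequality becomes an equality. In both cases $p_a(C) = 0$, and since $C$ is reduced and irreducible this forces $C \cong \BP^1$ to be a smooth rational curve.

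The main obstacle, paralleling the treatment in Lemma~\ref{eff-even}, is ruling out these two borderline cases. I will apply the Liu--Miyaoka bound
$$K_S.C \leq \Delta + 2g(C) - 2 - \chi(C) = 4 + 0 - 2 - 2 = 0$$
(where $\Delta = 3c_2 - c_1^2 = 4$ for a fake quadric), which contradicts $K_S.C = 4x + 2 > 0$ in the case $y = x+2$ and $K_S.C = 2x - 1 > 0$ in the case $y = -x-1$ (where the positivity $3x + y > 0$ already forces $x \geq 1$, the pair $x=0$, $y=-1$ being excluded from the outset). This establishes $-x-1 < y \leq x+1$, completing the proof.
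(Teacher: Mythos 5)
Your proof is correct and follows essentially the same route as the paper: extract $3x+y>0$ from ampleness of $K_S$ and $(x+1)(x+2)\geq y(y-1)$ from $p_a(C)\geq 0$, observe that the only integer solutions outside the claimed range are the borderline cases $y=x+2$ and $y=-x-1$ with $p_a(C)=0$, and eliminate those smooth rational curves via \cite[Theorem 1 (2)]{LM95} exactly as the paper does (note only that the reference is Lu--Miyaoka, not Liu--Miyaoka). No gaps.
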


\begin{proof}
Since $K_{S}$ is ample, we have $K_{S}.C>0$ and thus $3x+y>0$.
By the arithmetic genus formula, we get
\begin{equation}\label{odd-ari-gen}
p_{a}(C)=\frac{1}{2}(x+y+1)(x-y+2)\geq 0.
\end{equation}
Then we have only two possible cases:
\begin{enumerate}
\item[(1)] $x=0$ and $y=2$, or $x>0$ and $y=-(x+1)$, or $x>0$ and $y=x+2$, or
\item[(2)] $x\geq 0$ and $-(x+1)< y\leq x+1$.
\end{enumerate}
We will rule out case $(1)$.
The idea is the same as in the proof of Lemma \ref{eff-even}.
Suppose $C\equiv 2D_2$ or $C\equiv xD_1-(x+1)D_2$ or $C\equiv xD_1+(x+2)D_2$ with $x>0$.
Then the arithmetic genus $p_{a}(C)=0$ and thus $C$ is a smooth rational curve.
The log Miyaoka-Yau inequality $(K_X+C)^2 \leq 3(e(S)-e(C))$ yields the following constraint: 
\[(x+3)^2-(y-1)^2 \leq 6.\] Consequently, case (1) does not satisfy this inequality.
Thus the proof of Lemma \ref{eff-odd} is complete.
\end{proof}

The above result shows that, unlike the even type, the odd type may admit negative curves.

\begin{lem} \label{lem:negative}  Let $S$ be a fake quadric whose N\'eron--Severi lattice is of odd type. If an integral negative curve $E$ on $S$ exists, then it is unique and satisfies $E \equiv x_0 D_1 + (x_0 + 1) D_2$ for some integer $x_0 \ge 0$.
\end{lem}

\begin{proof} According to Lemma~\ref{eff-odd}, any integral negative curve on a fake quadric $S$ is numerically equivalent to $x D_1 + (x+1) D_2$ for some $x \in \mathbb{Z}_{\geq 0}$.

Let $x_0 \in \mathbb{Z}_{\geq 0}$ be the minimal integer such that $x_0 D_1 + (x_0+1) D_2$ is numerically equivalent to an effective divisor. Define the curve $E \equiv x_0 D_1 + (x_0+1) D_2$. Since $E^2 < 0$, we have $\dim |E| = 0$.

Now consider any divisor $D \equiv x D_1 + (x+1) D_2$ with $x > x_0$. If $D$ were effective and some element of $|D|$ were reduced and irreducible, then necessarily $\dim |D| = 0$ (otherwise a general member would be smooth and irreducible, contradicting the minimality of $x_0$). In this situation, $D - E$ would not be effective. Moreover, $\deg(D\vert_E) < 0$ implies $h^0(E, \mathcal{O}_E(D)) = 0$. From the exact sequence
\[
0 \to H^0(S, \mathcal{O}_S(D-E)) \to H^0(S, \mathcal{O}_S(D)) \to H^0(E, \mathcal{O}_E(D)) \to \cdots
\]
and $h^0(S, \mathcal{O}_S(D-E)) = 0$, we would obtain $h^0(S, \mathcal{O}_S(D)) = 0$, a contradiction. Thus any negative curve must be numerically equivalent to $E$.

Suppose $E' \equiv E$ is another negative curve. Then $E$ and $E'$ differ by a torsion divisor, so there exists $m \in \mathbb{Z}_{>0}$ such that $mE \sim mE'$. Consequently, $h^0(S, \mathcal{O}_S(mE)) \ge 2$. But $E^2 < 0$, and using the exact sequence
\[
0 \to H^0(S, \mathcal{O}_S((k-1)E)) \to H^0(S, \mathcal{O}_S(kE)) \to H^0(E, \mathcal{O}_E(kE)) \to \cdots,
\]
we obtain inductively
\[
h^0(S, \mathcal{O}_S(mE)) = h^0(S, \mathcal{O}_S((m-1)E)) = \cdots = h^0(S, \mathcal{O}_S(E)) = 1,
\]
a contradiction. Therefore, there can be at most one integral negative curve on $S$.
\end{proof}

  We shall now proceed to further investigate and refine this result.

 \begin{lem} \label{lem:nef cone} Let $S$ be a fake quadric of odd type. If a negative curve 
\[
E \equiv x_0 D_1 + (x_0 + 1) D_2
\]
exists on $S$ for some integer $x_0 \ge 0$, then the cone structures are given by
\[
\overline{\operatorname{Eff}(S)} = \mathbb{R}_{\ge 0}[D_1 - D_2] + \mathbb{R}_{\ge 0}[E]
\]
and
\[
\operatorname{Nef}(S) = \mathbb{R}_{\ge 0}[D_1 - D_2] + \mathbb{R}_{\ge 0}[N],
\]
where $N \equiv (x_0 + 1)D_1 + x_0 D_2$.
\end{lem}

 \begin{figure}[htbp]
\centering
\begin{tikzpicture}[scale=0.5]
\draw[->] (-5,0) -- (6,0) node[right] {$D_1$}; 
\draw[->] (0,-5) -- (0,6) node[above] {$D_2$}; 

\foreach \x in {-5,-4,-3,-2,-1,1,2,3,4,5} \draw (\x,0) -- (\x,-0.1) node[below] {\x};
\foreach \y in {-5,-4,-3,-2,-1,1,2,3,4,5} \draw (0,\y) -- (-0.1,\y) node[left] {\y};

\draw[domain=0:5,smooth,variable=\x,blue] plot ({\x},{-\x}) node[right] {$\mathbb R_{\geq 0} [D_1-D_2]$}; 
\draw[domain=0:5,smooth,variable=\x,red,dashed] plot ({\x},{\x}) node[right] {$\mathbb R_{\geq 0} [D_1+D_2]$}; 
\draw[domain=0:5,smooth,variable=\x,blue] plot ({\x},{3/2*\x}) node[right] {$\mathbb R_{\geq 0} [E]$}; 
\draw[domain=0:5,smooth,variable=\x,blue]  plot ({\x},{2/3*\x}) node[right] {$\mathbb R_{\geq 0}[N]$}; 

\foreach \x in {-5,...,5} {
    \foreach \y in {-5,...,5} {
        \fill (\x,\y) circle (2pt);
    }
}
\end{tikzpicture}
\caption{Cones of a fake quadric of odd type}
\label{fig:cone1}
\end{figure}

\begin{proof} From Lemma~\ref{eff-odd} and Lemma~\ref{lem:negative}, we obtain the inclusions
\[
\operatorname{Nef}(S) \subseteq \overline{\operatorname{Eff}(S)} \subseteq \mathbb{R}_{\ge 0}[D_1 - D_2] + \mathbb{R}_{\ge 0}[E].
\]

 Now suppose a divisor satisfies
\[
D \in \mathbb{R}_{\ge 0}[N] + \mathbb{R}_{\ge 0}[E].
\]
Then we have \(D \cdot E = m E^2 \le 0\) for some $m \geq 0$. This implies that the cone
\(\mathbb{R}_{\ge 0}[E] + \mathbb{R}_{\ge 0}[N]\) contains no ample divisor.

Since \(\overline{\operatorname{Amp}(S)} = \operatorname{Nef}(S)\), it follows that
\[
\operatorname{Nef}(S) \subseteq \mathbb{R}_{\ge 0}[D_1 - D_2] + \mathbb{R}_{\ge 0}[N].
\]

Observe that any divisor \(C \in \mathbb{R}_{\ge 0}[D_1 - D_2] + \mathbb{R}_{\ge 0}[E]\) satisfies
\(C \cdot (D_1 - D_2) \ge 0\); hence \(D_1 - D_2\) is a nef divisor.  It follows from the lemma~\ref{eff-odd} that $N$ is nef.  Consequently,
\[
\mathbb{R}_{\ge 0}[D_1 - D_2] \subset \operatorname{Nef}(S), \quad
\mathbb{R}_{\ge 0}[N] \subset \operatorname{Nef}(S),
\]
and therefore
\[
\operatorname{Nef}(S) = \mathbb{R}_{\ge 0}[D_1 - D_2] + \mathbb{R}_{\ge 0}[N].
\]

Finally, because \(\overline{\operatorname{Eff}(S)}\) is the dual cone of \(\operatorname{Nef}(S)\), we obtain
\[
\overline{\operatorname{Eff}(S)} = \mathbb{R}_{\ge 0}[D_1 - D_2] + \mathbb{R}_{\ge 0}[E].
\]
\end{proof}

\begin{lem}\label{lem:nef cone2}
Let $S$ be a fake quadric of odd type. 
Assume that no negative curve exists on $S$. Then The cones satisfy
    \[
    \operatorname{Nef}(S) = \overline{\operatorname{Eff}(S)} = \mathbb{R}_{\ge 0}[D_1 - D_2] + \mathbb{R}_{\ge 0}[D_1 + D_2].
    \]
\end{lem}

\begin{proof}
 By the assumption, Lemma~\ref{eff-odd} immediately implies that if $D\equiv xD_1+yD_2$ is effective, then $x \ge |y|$. This yields the inclusion
\[
\operatorname{Eff}(S) \subset \mathbb{R}_{\ge 0}[D_1 - D_2] + \mathbb{R}_{\ge 0}[D_1 + D_2].
\]

Since
\[
(D_1 - D_2)^2 = (D_1 + D_2)^2 = 0 \quad \text{and} \quad (D_1 - D_2) \cdot D \ge 0,\;\; (D_1 + D_2) \cdot D \ge 0
\]
whenever $x \ge |y|$, it follows that $D_1 - D_2$ and $D_1 + D_2$ are nef divisors. Consequently,
\[
\mathbb{R}_{\ge 0}[D_1 - D_2],\; \mathbb{R}_{\ge 0}[D_1 + D_2] \subset \operatorname{Nef}(S).
\]
And then $\mathbb{R}_{\ge 0}[D_1 - D_2] + \mathbb{R}_{\ge 0}[D_1 + D_2] \subset \operatorname{Nef}(S)$
Together with the inclusion $\operatorname{Nef}(S) \subset \overline{\operatorname{Eff}(S)}$, we obtain
\[
\operatorname{Nef}(S)=\overline{\operatorname{Eff}(S)}  = \mathbb{R}_{\ge 0}[D_1 - D_2] + \mathbb{R}_{\ge 0}[D_1 + D_2]
\]
\end{proof}

\begin{proof} [ Proof of Theorem \ref{thm:cone} of the odd type.]
The combination of Lemma \ref{lem:nef cone} and Lemma \ref{lem:nef cone2} completes the proof of Theorem \ref{thm:cone} for the odd type. 
\end{proof}

Additionally, together with the arithmetic genus Formula \eqref{odd-ari-gen}, we have the following corollary.

\begin{cor}\label{coro:genus-odd}
Let $S$ be a fake quadric of odd type. There is no smooth rational curve on $S$.
\end{cor}

As another consequence of Theorem \ref{thm:cone}, we can obtain the following result.

\begin{cor}\label{coro:genus-odd}
Let $S$ be a fake quadric of odd type. If the divisor $D=xD_1+yD_2$ is ample, then $x> \lvert y \rvert$.
\end{cor}

\begin{proof} Note that $\mathring{\Nef}(S)=\Amp(S),$ where  $\mathring{\Nef}(S)$ is the set of interior points of the nef cone $\Nef(S).$  By Theorem \ref{thm:cone} (or Lemma \ref{lem:nef cone} and Lemma \ref{lem:nef cone2}), the result holds. 
\end{proof}

Based on this numerical condition, we can further provide a sufficient criterion for the effectiveness of a divisor, as stated in the following theorem.

\begin{thm}\label{lem:odd}
Let $S$ be a fake quadric of odd type and $D\equiv x D_1 + y D_2$ a divisor on $S$.  If $x + y > 1$ and $x - y > 2$, then $h^0(S, \mathcal{O}_S(D)) \geq \chi(\mathcal{O}_S(D)) > 0$ unless $D \sim K_S$.
\end{thm}

\begin{proof}
 If $K_S - D \equiv (3 - x)D_1 - (y + 1)D_2$ is effective, then by Theorem \ref{thm:cone} or Lemma \ref{eff-odd} we have
\[
x - 3 \le -(y + 1) \le (3 - x) + 1,
\]
which implies $x - y \le 5$ and $x + y \le 2$. Consequently, $x \le 3$.

Given $x + y > 1$ and $x - y > 2$, we obtain $x \ge 2$. Thus we only need to consider $x = 2$ or $x = 3$. The conditions $x + y > 1$ and $x + y \le 2$ force $x + y = 2$.

\begin{itemize}
\item If $x = 2$, then $y = 0$, which contradicts $x - y > 2$.
\item If $x = 3$, then $y = -1$, and hence $D \equiv K_S$. In this case, $K_S - D$ is a torsion divisor, again a contradiction.
\end{itemize}

Therefore, we conclude that $h^0(S, \mathcal{O}_S(K_S - D)) = 0$.

Hence, by the Riemann--Roch theorem,
\[
h^0(S, \mathcal{O}_S(D)) \ge \chi(S, \mathcal{O}_S(D)) = \frac{1}{2}(x + y - 1)(x - y - 2) \ge 1.
\] 
\end{proof}

\subsection{Embedding fake quadrics of odd type in $\BP^{4}$}

In this subsection, we continue our investigation of fake quadrics that cannot be embedded into $\BP^{4}$ and complete the remaining proof of Theorem \ref{thm:P4}.

\begin{proof} [ Proof of Theorem \ref{thm:P4} of the odd type.]

Let $S$ be a fake quadric of odd type and suppose there exists an embedding $\iota: S \hookrightarrow \mathbb P^4$.
Using the same notation as in the even case, assume that $D \equiv x D_1 + y D_2$ is an ample divisor. Then equation \eqref{double-point-formula} implies that $x$ and $y$ must satisfy
\[
(x^2 - y^2)^2 - 10(x^2 - y^2) - 5(3x + y) - 4 = 0.
\]
From Corollary \ref{coro:genus-odd}, we have $x > |y|$.

A suitable rearrangement of the above equation yields
\begin{equation}\label{embed-eq2}
(x^2 - y^2 - 5)^2 = 5(3x + y) + 29, \quad \text{for } x > |y|.
\end{equation}

If either $x + y \le 1$ or $x - y \le 2$, then Lemma \ref{lem:3} implies that equation \eqref{embed-eq2} has no solution.

Similar to the even type case, we have
\[
h^0\bigl(S, \mathcal{O}_S(D)\bigr) < \binom{4}{2} = 6.
\]

If $x + y > 1$ and $x - y > 2$, then by Theorem \ref{lem:odd}, unless $D \sim K_S$, we obtain
\[
\frac{1}{2} (x + y - 1)(x - y - 2) = \chi\bigl(S, \mathcal{O}_S(D)\bigr) \le h^0\bigl(S, \mathcal{O}_S(D)\bigr) < 6.
\]
Consequently,
\begin{equation}\label{bound-eq2}
(x + y - 1)(x - y - 2) \le 10.
\end{equation}
By Lemma \ref{lem:4}, none of the integer pairs satisfying \eqref{bound-eq2} fulfill equation \eqref{embed-eq2}.

This completes the proof.
\end{proof}

According to  the formula \eqref{odd-RR-chi},   the above theorem immediately yields the following corollary.

\begin{cor}\label{coro:very 2}
Let $S$ be a fake quadric of odd type. Then the divisors $D \equiv 4D_1- D_2$ and $D \equiv 5D_1- 2D_2$ are not very ample.
\end{cor}

\begin{proof} We mainly consider $D \equiv 4D_1 - D_2$; the same method can be applied to prove the case $D \equiv 5D_1 - 2D_2$. Suppose that $D=K_S+L$, then $L\equiv D_1$. According to Theorem \ref{thm:cone}, $L$ is nef and big. By Kawamata-Viehweg Theorem, we obtain that
\[h^0(S,\CO_S(D))=\chi(\CO_S(D))= \frac{1}{2}(x+y-1)(x-y-2)=3.\]  If such divisor $D$ is very ample, then $\tau_{\rvert D \rvert}: S \to \BP^2$ is an embedding. This contradicts the Theorem \ref{thm:P4}. 
\end{proof}

\subsection{The bounded cohomology property for  fake quadrics of  odd type}

In this subsection, a curve also means a reduced and irreducible curve on a surface.
Using the same argument as the even type (Sec.2.4. Lemma  \ref{lem:xy2}), we can also prove the following result.

\begin{prop}\label{lem:x-y>2}  If $S$ is a fake quadric of  odd type and a curve $C\equiv xH+yF,x,y \in \mathbb Z$,  on $S$,
   then $C$ satisfies the bounded cohomology property for  
\[ x+y> 1 \quad \text{and} \quad x-y> 2,   \quad \text{or} \quad x+y=1, \quad \text{or} \quad  x-y=2,\quad \text{or} \quad x=\pm y.\]
\end{prop}

\begin{proof}   Recall that $$\chi(\CO_S(C))=h^0(\CO_S(C))-h^1(\CO_S(C)).$$ We consider four cases.

\noindent\textbf{Case 1: $x + y > 1$ and $x - y > 2$.} 
Since
\[
h^0(\mathcal{O}_S(C)) - h^1(\mathcal{O}_S(C)) = \chi(\mathcal{O}_S(C)) = \frac{1}{2}(x+y-1)(x-y-2) > 0,
\]
we obtain $h^1(\mathcal{O}_S(C)) < h^0(\mathcal{O}_S(C))$.

\noindent\textbf{Case 2: $x + y = 1$ or $x - y = 2$.} 
Since
\[
h^0(\mathcal{O}_S(C)) - h^1(\mathcal{O}_S(C)) = \chi(\mathcal{O}_S(C)) = \frac{1}{2}(x+y-1)(x-y-2) = 0,
\]
we obtain $h^1(\mathcal{O}_S(C)) = h^0(\mathcal{O}_S(C))$.

\noindent\textbf{Case 3: $x = -y$.} 
Since $C^2 = 0$, by Proposition~\ref{prop:D2=0}, we have $h^0(S, \mathcal{O}_S(C)) \in \{1, 2\}$. 
Using the same argument as for the even type (Section~2.3), this case can be proved.

\noindent\textbf{Case 4: $x = y$.} 
If no negative curve exists on $S$, the argument is the same as in Case~3. 
If there exists a negative curve $E \equiv x_0 D_1 + (x_0 + 1)D_2$ on $S$, then since $C$ is reduced and irreducible, either $C$ is a negative curve, or it lies in the cone $\mathbb{R}_{\ge 0}[D_1 - D_2] + \mathbb{R}_{\ge 0}[N]$, implying $x \neq y$.
\end{proof}

Next, we only need to verify whether the case $x - y = 1$ satisfies the bounded cohomology property. If a negative curve  exists on $S$, we have the following result.

\begin{lem}  Let $S$ be a fake quadric of odd type.  If a negative curve $E \equiv x_0 D_1 + (x_0 + 1)D_2$ exists on $S$, then the bounded cohomology property holds.
\end{lem}

\begin{proof} From the above discussion, we restrict our attention to curves of the form $C \equiv (x+1)D_1 + xD_2$. Consider the ray $\mathbb R_{\ge 0}[N]$ intersecting the line $(x+1)D_1 + xD_2$ at $(x_0 + 1, x_0)$. When $x > x_0$, we have that $C$ is neither a negative curve nor contained in the cone $\mathbb{R}_{\ge 0}[D_1 - D_2] + \mathbb{R}_{\ge 0}[N]$; hence $C$ is not irreducible.  When $x \le x_0$, there are only finitely many such curves $C$, and the conclusion  holds trivially.
\end{proof}

\begin{figure}[htbp]
\centering
\begin{tikzpicture}[scale=0.5]
\draw[->] (-5,0) -- (9,0) node[right] {$D_1$};
\draw[->] (0,-5) -- (0,10) node[above] {$D_2$};

\foreach \x in {-5,-4,-3,-2,-1,1,2,3,4,5,6,7,8} \draw (\x,0) -- (\x,-0.1) node[below] {\x};
\foreach \y in {-5,-4,-3,-2,-1,1,2,3,4,5,6,7,8,9} \draw (0,\y) -- (-0.1,\y) node[left] {\y};

\draw[domain=0:8,smooth,variable=\x,blue] plot ({\x},{-\x}) node[right] {$\mathbb R_{\geq 0} [D_1-D_2]$};
\draw[domain=0:8,smooth,variable=\x,red,dashed] plot ({\x},{\x}) node[right] at (6,9) {$\mathbb R_{\geq 0} [D_1+D_2]$};
\draw[domain=0:8,smooth,variable=\x,blue] plot ({\x},{3/2*\x}) node[right] {$\mathbb R_{\geq 0} [E]$};
\draw[domain=0:8,smooth,variable=\x,blue] plot ({\x},{2/3*\x}) node[right] {$\mathbb R_{\geq 0}[N]$};

\draw[domain=-4:8,smooth,variable=\x,green!70!black,thick] plot ({\x},{1-\x}) node[above right,green!70!black] {$x+y=1$};
\draw[domain=-3:8,smooth,variable=\x,green!70!black,thick] plot ({\x},{\x-2}) node[above right,green!70!black] at (8,6) {$x-y=2$};
\draw[domain=-4:8,smooth,variable=\x,orange!70!black,thick] plot ({\x},{\x-1}) node[above right,orange!70!black] {$x-y=1$};

\fill[black] (3,2) circle (3pt); 
\node[black, above right] at (3,2) {$(x_0+1, x_0)$};

\foreach \x in {-5,-4,...,8} {
    \foreach \y in {-5,-4,...,9} {
        \fill (\x,\y) circle (2pt);
    }
}
\end{tikzpicture}
\caption{Cones of a fake quadric of odd type with additional lines}
\label{fig:cone2}
\end{figure}

If $C\equiv (x+1)D_1 + xD_2$, we always have
\[
h^{0}(S,\mathcal{O}_{S}(C)) - h^{1}(S,\mathcal{O}_{S}(C)) = \chi(\mathcal{O}_{S}(C)) = \frac{1}{2}(x+1+x-1)(x+1-x-2) = -x.
\]
However, when a negative curve exists, this formula is not needed, as the problem can be handled as described above. In contrast, when no negative curve exists, we are forced to rely on this formula, which gives a negative value and makes it difficult to determine the bounded cohomology property for $C \equiv (x+1)D_1 + xD_2$. Therefore, we only consider the case when $S$ is a Mori dream space.

\begin{proof}[Proof of Theorem \ref{thm:bound} of the odd type]
Under the above discussion, we still only consider the curve $C \equiv (x+1)D_1 + xD_2$. Let $C_1 \equiv D_1 - D_2$, $C_2 \equiv D_1 + D_2$, and assume that no negative curve exists on $S$.

If $S$ is a Mori dream space,  then by \cite{KL19},  $C_i$ is semiample for $i=1,2$. Since $C_i^2=0$, by Proposition \ref{prop:D2=0}, the Iitaka dimension satisfies $\kappa(S, C_i) = 1$.

Let $x_0$ be the minimal integer such that $h^0(S, \mathcal{O}_S((x+1)D_1 + xD_2)) > 0$ for all $x \ge x_0$. Define
\[
C_0 \equiv (x_0+1)D_1 + x_0D_2 \quad \text{and} \quad x = x_0 + m,\quad m \gg 0.
\]
According to \cite[2.1.38]{LA04}, we have $h^0(S, \mathcal{O}_S(mC_i)) \ge c m$ for some constant $c > 0$. Consequently,
\[
h^0(S, \mathcal{O}_S(C)) = h^0(S, \mathcal{O}_S(C_0 + mC_2)) \ge h^0(S, \mathcal{O}_S(mC_2)) \ge c m.
\]
Thus $c^{-1} h^0(S, \mathcal{O}_S(C)) \ge m$.

Recall that
\[
h^0(S, \mathcal{O}_S(C)) + x = h^1(S, \mathcal{O}_S(C)).
\]
Therefore,
\[
(c^{-1} + 1) h^0(S, \mathcal{O}_S(C)) \ge m + h^0(S, \mathcal{O}_S(C)) = h^1(S, \mathcal{O}_S(C)) - x_0.
\]
Since $h^0(S, \mathcal{O}_S(C)) > x_0$ for sufficiently large $m$, we obtain
\[
(c^{-1} + 2) h^0(S, \mathcal{O}_S(C)) > (c^{-1} + 1) h^0(S, \mathcal{O}_S(C)) + x_0 \ge h^1(S, \mathcal{O}_S(C)).
\]
This implies that the bounded cohomology property holds.
\end{proof}

Moreover, the above proof implies the following corollary.

\begin{cor}
Let $S$ be a fake quadric of odd type and no negative curve exists on $S$. If $D_1+D_2$ is semiample,  then the bounded cohomology property is satisfied.
\end{cor}

\appendix
\section{Appendix}

In the proof of Theorem \ref{thm:P4}, some tedious calculations are required. We present the details in this chapter.

\begin{lem}\label{lem:1}
There are no integer solutions to the system of inequalities \eqref{embed-eq1} with $0 < x < 3$, nor with $0 < y < 3$.
\end{lem}

\begin{proof}
Since $x$ and $y$ are positive integers, the condition $0 < x < 3$ implies $x = 1$ or $x = 2$.  
Similarly, $0 < y < 3$ implies $y = 1$ or $y = 2$.

We examine each case.

\noindent\textbf{Case 1: $x = 1$.}  
Substituting $x = 1$ into the equation gives
\[
2 \cdot 1^2 \cdot y^2 - 10 \cdot 1 \cdot y - 5 \cdot 1 - 5y - 2 = 0,
\]
which simplifies to
\[
2y^2 - 15y - 7 = 0.
\]
Here $a = 2$, $b = -15$, $c = -7$. The discriminant is
\[
\Delta = b^2 - 4ac = (-15)^2 - 4 \cdot 2 \cdot (-7) = 225 + 56 = 281.
\]
Since $281$ is not a perfect square, the quadratic has no integer roots. Hence no integer $y > 0$ satisfies the equation when $x = 1$.

\noindent\textbf{Case 2: $x = 2$.}  
Substituting $x = 2$ gives
\[
2 \cdot 4 \cdot y^2 - 10 \cdot 2 \cdot y - 5 \cdot 2 - 5y - 2 = 0,
\]
which simplifies to
\[
8y^2 - 25y - 12 = 0.
\]
Here $a = 8$, $b = -25$, $c = -12$. The discriminant is
\[
\Delta = (-25)^2 - 4 \cdot 8 \cdot (-12) = 625 + 384 = 1009.
\]
Since $1009$ is not a perfect square, there is no integer $y > 0$ satisfying the equation when $x = 2$.

\noindent\textbf{Case 3: $y = 1$.}  
By symmetry of the equation in $x$ and $y$, substituting $y = 1$ gives
\[
2x^2 - 15x - 7 = 0.
\]
Here $a = 2$, $b = -15$, $c = -7$. The discriminant is
\[
\Delta = b^2 - 4ac = (-15)^2 - 4 \cdot 2 \cdot (-7) = 225 + 56 = 281,
\]
which is not a perfect square. Hence no integer $x > 0$ satisfies the equation when $y = 1$.

\noindent\textbf{Case 4: $y = 2$.}  
Similarly, substituting $y = 2$ gives
\[
8x^2 - 25x - 12 = 0.
\]
Here $a = 8$, $b = -25$, $c = -12$. The discriminant is
\[
\Delta = b^2 - 4ac = (-25)^2 - 4 \cdot 8 \cdot (-12) = 625 + 384 = 1009,
\]
which is not a perfect square. Thus no integer $x > 0$ satisfies the equation when $y = 2$.

Since all possibilities for $0 < x < 3$ or $0 < y < 3$ lead to a quadratic with a non-square discriminant, there are no positive integer solutions $(x, y)$ to the given equation under these conditions. This completes the proof.
\end{proof}

\begin{lem}\label{lem:2}
The system of inequalities consisting of \eqref{embed-eq1} and \eqref{bound-eq1} has no integer solutions  with $x\ge 3$ and $y \ge 3$.
\end{lem}

\begin{proof} Let $x, y \ge 3$ be integers. Set $a = x - 1 \ge 2$, $b = y - 1 \ge 2$. The inequality becomes
$
ab < 6.
$
Since $a, b \ge 2$ are integers, the only possible pair $(a, b)$ with $ab < 6$ is $(a, b) = (2, 2)$. Hence $x = 3$ and $y = 3$.

Substituting $x = y = 3$ into the equation \eqref{embed-eq1} gives
$$
2 \cdot 3^2 \cdot 3^2 - 10 \cdot 3 \cdot 3 - 5 \cdot 3 - 5 \cdot 3 - 2
= 2 \cdot 81 - 90 - 15 - 15 - 2
= 162 - 122 = 40 \neq 0.
$$
Thus $(3, 3)$ does not satisfy the equation. Therefore no integers $x, y \ge 3$ satisfy both conditions simultaneously, completing the proof.
\end{proof}

\begin{lem}\label{lem:3}
There are no integer solutions to the system of inequalities \eqref{embed-eq2} with $x + y \le 1$, nor with $x - y \le 2$.
\end{lem}

\begin{proof} We consider two cases.

\noindent\textbf{Case 1: $x + y \le 1$.}  
Since $x > |y|$, we have $x + y > 0$. Combining with $x + y \le 1$ gives $x + y = 1$.  
Set $x = 1 - y$. Substituting into $x > |y|$ yields $1 - y > |y|$.

\begin{itemize}
\item If $y \ge 0$: $1 - y > y \Rightarrow 1 > 2y \Rightarrow y = 0$, so $x = 1$.  
Check the equation \eqref{embed-eq2}: \[x^2 - y^2 = 1, \quad LHS = (1-5)^2 = 16, \quad RHS = 5(3\cdot1+0)+29 = 44,\] not equal.

\item If $y < 0$: $1 - y > -y$ holds automatically. Substitute $x = 1-y$ into the equation \eqref{embed-eq2}:  
We have $x^2 - y^2 = (1-y)^2 - y^2 = 1 - 2y$.  
The equation becomes
\[
(1 - 2y - 5)^2 = 5[3(1-y) + y] + 29.
\]
Simplify:
\[
2y^2 + 13y - 14 = 0.
\]
Here $a = 2$, $b = 13$, $c = -14$. The discriminant is
\[
\Delta = b^2 - 4ac = 13^2 - 4 \cdot 2 \cdot (-14) = 169 + 112 = 281.
\]
Since $281$ is not a perfect square, the quadratic has no integer roots. Hence no integer $y$ satisfies the equation in this subcase.
\end{itemize}
Thus Case 1 yields no integer solutions.

\noindent\textbf{Case 2: $x - y \le 2$.}  
Since $x > |y|$, we have $x - y > 0$, so $x - y = 1$ or $2$.

\noindent\textit{Subcase 2.1: $x - y = 1$.} Then $x = y+1$, $x^2 - y^2 = 2y+1$.  
The equation becomes
\[
(2y + 1 - 5)^2 = 5[3(y+1) + y] + 29,
\]
Simplify:
\[
y^2 - 9y - 7 = 0.
\]
Here $a = 1$, $b = -9$, $c = -7$. The discriminant is
\[
\Delta = (-9)^2 - 4 \cdot 1 \cdot (-7) = 81 + 28 = 109.
\]
Since $109$ is not a perfect square, there is no integer $y$.

\noindent\textit{Subcase 2.2: $x - y = 2$.} Then $x = y+2$, $x^2 - y^2 = 4y+4 = 4(y+1)$.  
The equation becomes
\[
(4y + 4 - 5)^2 = 5[3(y+2) + y] + 29,
\]
Simplify:
\[
8y^2 - 14y - 29 = 0.
\]
Here $a = 8$, $b = -14$, $c = -29$. The discriminant is
\[
\Delta = (-14)^2 - 4 \cdot 8 \cdot (-29) = 196 + 928 = 1124.
\]
Since $1124 = 4 \cdot 281$ is not a perfect square, there is no integer $y$.
Hence Case 2 also yields no integer solutions.

Both cases lead to no integer solutions, completing the proof.
\end{proof}

\begin{lem}\label{lem:4}
The system of inequalities consisting of \eqref{embed-eq2} and \eqref{bound-eq2} has no integer solutions with $x + y > 1$ and $x - y > 2$.
\end{lem}

\begin{proof} The integer solutions of the inequality \eqref{bound-eq2} are
\[
(3,-1),\ (4,-2),\ (5,-3),\ (6,-4),\ (7,-5),\ (3,0),\ (4,-1),\ (5,-2),\ (4,0),\ (4,1),\ (5,1),\ (5,2),\ (6,3).
\]
Now we test each in the equation \eqref{embed-eq2}:
\begin{align*}
(3,-1)&:\ \text{left:}\ (9-1-5)^2=3^2=9,\qquad \text{right:}\ 5(9-1)+29=40+29=69;\\
(4,-2)&:\ \text{left:}\ (16-4-5)^2=7^2=49,\qquad \text{right:}\ 5(12-2)+29=50+29=79;\\
(5,-3)&:\ \text{left:}\ (25-9-5)^2=11^2=121,\quad \text{right:}\ 5(15-3)+29=60+29=89;\\
(6,-4)&:\ \text{left:}\ (36-16-5)^2=15^2=225,\quad \text{right:}\ 5(18-4)+29=70+29=99;\\
(7,-5)&:\ \text{left:}\ (49-25-5)^2=19^2=361,\quad \text{right:}\ 5(21-5)+29=80+29=109;\\
(3,0)&:\ \text{left:}\ (9-0-5)^2=4^2=16,\quad \text{right:}\ 5(9+0)+29=45+29=74;\\
(4,-1)&:\ \text{left:}\ (16-1-5)^2=10^2=100,\quad \text{right:}\ 5(12-1)+29=55+29=84;\\
(5,-2)&:\ \text{left:}\ (25-4-5)^2=16^2=256,\quad \text{right:}\ 5(15-2)+29=65+29=94;\\
(4,0)&:\ \text{left:}\ (16-0-5)^2=11^2=121,\quad \text{right:}\ 5(12+0)+29=60+29=89;\\
(4,1)&:\ \text{left:}\ (16-1-5)^2=10^2=100,\quad \text{right:}\ 5(12+1)+29=65+29=94;\\
(5,1)&:\ \text{left:}\ (25-1-5)^2=19^2=361,\quad \text{right:}\ 5(15+1)+29=80+29=109;\\
(5,2)&:\ \text{left:}\ (25-4-5)^2=16^2=256,\quad \text{right:}\ 5(15+2)+29=85+29=114;\\
(6,3)&:\ \text{left:}\ (36-9-5)^2=22^2=484,\quad \text{right:}\ 5(18+3)+29=105+29=134.
\end{align*}
None satisfy the equation. Therefore the system has no integer solutions.
\end{proof}


\end{document}